\theoremstyle{plain}
\newtheorem{theorem}{Theorem}[section]
\newtheorem{corollary}[theorem]{Corollary}
\newtheorem{lemma}[theorem]{Lemma}
\theoremstyle{definition}
\newtheorem{definition}{Definition}[section]
\theoremstyle{remark}
\newtheorem{example}{Example}[section]
\numberwithin{equation}{section}
\numberwithin{table}{section}
\numberwithin{figure}{section}
\title[On Noncrossing and nonnesting partitions of type $D$]
{On Noncrossing and nonnesting partitions of type $D$}
\author{Alessandro Conflitti}
\address{CMUC \\ Centro de Matem\'atica \\ Universidade de Coimbra \\ Apartado 3008 \\
3001-454 Coimbra \\ Portugal} \email{conflitt@mat.uc.pt}
\author{Ricardo Mamede}
\address{CMUC \\ Centro de Matem\'atica \\ Universidade de Coimbra \\ Apartado 3008 \\
3001-454 Coimbra\\ Portugal} \email{mamede@mat.uc.pt}
\keywords{Root systems, noncrossing partitions, nonnesting partitions, bijection}
\subjclass[2000]{05A18; 05E15}
\thanks{Both authors were supported by CMUC - Centro de Matem\'atica da
Universidade Coimbra. The first author was also supported by FCT
Portuguese Foundation of Science and Technology (Funda\c{c}\~{a}o para a
Ci\^{e}ncia e a Tecnologia) Grant SFRH/BPD/30471/2006.}
\begin{document}

\begin{abstract}
We present an explicit bijection between noncrossing and nonnesting partitions of Coxeter systems of type
$D$ which preserves openers, closers and transients.
\end{abstract}

\maketitle

\section{Overview}

The lattice of set partitions of a set of $n$ elements can be interpreted as the intersection lattice for the hyperplane arragement corresponding to a  root system of type $A_{n-1}$, i.e. the symmetric group of $n$ objects, $S_{n}$. In particular, two of its subposets are very well--behaved and widely studied, i.e. the lattice of, respectively, noncrossing and nonnesting partitions, which have a lot of interesting combinatorial properties, see e.g. \cite{Bia,CDDSY,DZ,Edel,KZ,Kre,LYHC,NS,Sim1,Sim2,Sim3,Spe1,Spe2} and the references therein.

Recently, Victor Reiner \cite{reiner} and Alexander Postnikov, see Christos Athanasiadis' paper~\cite[\S 6, Remark 2]{atha} generalized the notion
of, respectively, noncrossing and nonnesting partitions to all classical reflection groups, and a new very active research area sprung up, namely generalizing previous known results held for noncrossing and nonnesting partitions of type $A$ to their type $B$ and $D$ analogue,
see e.g. \cite{Arm,atha1,Kra,Stump,Th} and the references therein.

In particular, it is well known that in type $A$ the number of noncrossing partition equals the number of nonnesting partition, and several  bijections have been established. Recently one of us in \cite{m,m1} solved the corresponding problem for the type $B$ (and type $C$) presenting an explicit bijection which falls back to one already known when restricted to the type $A$.

The goal of this paper is to present an explicit bijection for the type $D$, therefore solving the problem for all classical Weyl groups, and to show that this bijection has interesting additional combinatorial properties. For instance, for any classical reflection group and any partition it is possible to  define three different maps, called \emph{openers}, \emph{closers}, and \emph{transients}, going from the partition itself to subsets of the involved reflection group, and we show that our bijection preserves all these functions.
Furthermore, our map remains a bijection if restricted to the two sets of respectively, noncrossing and nonnesting partitions which are \emph{both} of type $B$ and type $D$, and therefore it is also a bijection between the partitions which are noncrossing in type $D$ but which are actually crossing in type $B$, and the partition which are nonnesting in type $D$ but which are actually nesting in type $B$.

Very recently, two other bijections have been presented for solving the
type $D$ problem, see~\cite{fink, stump1}, but both of them have different
designs and settings. Moreover in~\cite{fink} openers, closers and
transients are not preserved.

%Very recently, two other bijections have been presented for solving the type $D$ problem, see~\cite{fink, stump1}, but %they seem not to share all  properties our holds: for example, openers, closers and transients are not preserved.

\section{Definitions and Preliminars}

In this section we review the notions of noncrossing and nonnesting
partitions of types $B$ and $D$, following closely \cite{atha,atha1},
and referring to \cite{BB,hum} for any undefined terminology and comprehensive references on Coxeter groups.
Throughout the paper, let $$[n]:=\{1,2,\ldots,n\},$$
$$[\pm n]:=\{\overline{1},\overline{2},\ldots,\overline{n},1,2,\ldots,n\}$$
for any positive integer $n$, where we set $\overline{i}:=-i$.

The Coxeter group $W^{B_n}$ of type  $B_n$ is realized combinatorially as the
hyperoctahedral group of signed permutations of $[\pm n]$. These
are permutations of $[\pm n]$ which commute with
the involution $i\mapsto \overline{i}$. We write the elements of
$W$ in cycle notation, using commas between elements.
The simple generators of $W^{B_n}$ are the transposition $(\overline{1},\; 1)$ and
the pairs $(\overline{i+1},\; \overline{i})(i,\; i+1)$ for
$i=1,\ldots,n-1$. The reflections in $W^{B_n}$ are the transpositions
$(\overline{i},\; i)$ for $i=1,\ldots,n$, and the pairs of
transpositions $(i,\; j)(\overline{j},\; \overline{i})$  for $i\neq j$.
Identifying the sets $[\pm n]$ and $[2n]$ through the map  $i\mapsto i$
for $i\in[n]$ and $i\mapsto n+\overline{i}$ for $i\in\{\overline{1},\overline{2},\ldots,\overline{n}\}$,
allows us to identify the hyperoctahedral group $W^{B_n}$ with the subgroup $U$ of $\mathfrak{S}_{2n}$
which commutes with  the permutation  $(1,n+1)(2,n+2)\cdots (n,2n)$.

Denoting by $e_1,\ldots,e_n$ the standard basis of $\mathbb{R}^n$, the
root system of type $B_n$ consists on the set of $2n^2$ vectors
\begin{displaymath}\Phi=\{\pm e_i:1\leq i\leq n\}\cup\{\pm e_i\pm e_j:i\neq j, 1\leq i,j\leq n\},\end{displaymath}
and we take
\begin{displaymath}\Phi^+= \{e_i:1\leq i\leq n\}\cup\{e_i\pm e_j:1\leq j<i\leq n\}\end{displaymath}
as a choice of positive roots. Each root $e_i,e_i-e_j$ and
$e_i+e_j$ defines a reflection that acts on  $\mathbb{R}^n$ as the permutation $(i,\;-i)$,
$(i,\;j)(-i,\;-j)$ and $(i,\;-j)(-i,\;j)$, respectively.

The crystallographic root system of type $D_n$ has $n(n - 1)$ positive roots, consisting of
$$\{e_i\pm e_j\},\text{ for }1\leq i<j\leq n.$$
Thus, $D_n$ is a sub--root system of $B_n$, and the Weyl group $W^{D_n}$ is an
(index 2) subgroup of the signed permutation group $W^{B_n}$, generated by the elements
$(1,\; \overline{2})(\overline{1},\; 2)$ and $(\overline{i+1},\; \overline{i})(i,\; i+1)$ for
$i=1,\ldots,n-1$. The reflections in $W^{D_n}$ are the transpositions
$(i,\; j)(\overline{i},\; \overline{j})$ for $1 \leq i < |j| \leq n$.
Any element of $W^{D_n}$ can be expressed uniquely (up to reordering) as a product of disjoint cycles
$$c_1\overline{c}_1\cdots c_k\overline{c}_kd_1\cdots d_r,$$
each having at least two elements, where $\overline{c}$ is obtained by negating the elements of $c$, $\overline{d}_j=d_j$ for $j=1,2,\ldots,r$, and $r$ is even (see \cite{brady}).

\subsection{Set partitions and partitions of type $B$ and $D$}

A partition of a finite set $S$ is a collection of pairwise disjoint, non-empty, subsets of $S$, called blocks, whose union is the whole set. A generic
set partition with no further restriction is sometimes denoted as partition of type $A$, because the lattice of all set partitions of a set of $n$ elements
can be interpreted as the intersection lattice for the hyperplane arragement corresponding to a root system of type $A_{n-1}$, i.e. the symmetric group of $n$ objects, $S_{n}$. A partition of $[n]$ can be visualized by placing the numbers $1,2,\ldots,n$ in this order along a line and then joining consecutive elements of each block by an arc. A singleton of a set partition is a block which has only one element, so it corresponds to an isolated vertex in the graphical representation. The  smallest element of each block is called an opener, the greatest is said a closer, and the remaining ones are called transients. These elements can be recognized by looking at the graphical representation of the partition: the openers correspond to singletons and to vertices from which an arc begins and no arc ends, the closers correspond to singletons and to vertices to which an arc ends and no arc begins, and the transients are the vertices from which an arc ends and another one begins.

A $B_n$--partition $\pi$ is a partition of $[\pm n]$ which has at most one block (called the zero block) fixed by negation and it is such that for any block $B$ of $\pi$, the set $-B$, obtained by negating the elements of $B$,
is also a block of $\pi$. The type of a $B_n$--partition $\pi$ is the integer partition whose parts are the cardinalities of the blocks of $\pi$, including one part for each pair of nonzero blocks $B,-B$.
A $D_n$--partition is a $B_n$--partition $\pi$ with the additional property that the zero block, when it is eventually present, has more than two elements.
The set of all $B_n$--partitions, ordered by refinement, is denoted by $\prod^B(n)$, and its subposet consisting of all $D_n$--partitions is denoted by
$\prod^D(n)$. The posets $\prod^B(n)$ and $\prod^D(n)$ are geometric lattices which are isomorphic to the intersection lattice of the $B_n$ and $D_n$ Coxeter hyperplane arrangement, respectively.

Identifying the sets $[\pm n]$ and $[2n]$ as before, we may represent $B_n$ and $D_n$--partitions graphically using the conventions made for their type
$A$ analogue, placing the integers $$\overline{1},\overline{2},\ldots,\overline{n},1,2,\ldots,n$$ along a line instead of the usual
$1,2,\ldots,2n$. For any $\pi\in\prod^B(n)$, let the set of openers $op(\pi)$ be made by the least element of all blocks of $\pi$ having only positive  integers; let the set of closers $cl(\pi)$ be made by the greatest element of all blocks of $\pi$ having only positive integers and by the absolute values  of the least and greatest elements of all blocks having positive and negative integers; and finally let the set of transients $tr(\pi)$ be made by all  elements of $[n]$ which are not in $op(\pi) \cup cl(\pi)$.

\begin{example}\label{ncex1}
The  $B_5$ set partition $\pi=\{\{-1,1\},\{2,3,5\},\{-2,-3,-5\},\{4\},\{-4\}\},$ represented below, has
type $(3,2,1)$, set of openers $op(\pi)=\{2,4\}$, closers $cl(\pi)=\{1,4,5\}$ and transients $tr(\pi)=\{3\}$. The openers, closers and transients can be visualized as in type $A$ by looking only at the arcs on the positive half of the representation of $\pi$. Note that $\pi$ is not a $D_n$-partition since the zero block has only two elements.

\begin{pspicture}(-3.4,-0.5)(3.4,2)
\multido{\i=1+1}{5}{$\;\;\overline{\i}\;\;$}\multido{\i=1+1}{5}{$\;\;\i\;\;$}
\pscurve[linewidth=0.5pt]{}(-6.5,0.5)(-4.6,1.4)(-3.05,0.5)
\pscurve[linewidth=0.5pt]{}(-5.8,0.5)(-5.4,0.8)(-5,0.5)
\pscurve[linewidth=0.5pt]{}(-5,0.5)(-4.3,0.8)(-3.6,0.5)

\pscurve[linewidth=0.5pt]{}(-2.4,0.5)(-2,0.8)(-1.6,0.5)
\pscurve[linewidth=0.5pt]{}(-1.6,0.5)(-0.9,0.8)(-0.3,0.5)
\end{pspicture}

\end{example}

\subsection{Noncrossing partitions}

Following \cite{atha1}, we now present the notion of noncrossing partitions of types $B$ and $D$.

Label the vertices of a convex $2n$--gon as $1,2,\ldots,n,\overline{1},\overline{2},\ldots,\overline{n}$ clockwise, in this order. Given a $B_n$--partition $\pi$ and a block $B$, let $\rho(B)$ be the convex hull of the set of vertices labeled with the elements of $B$. We say that $\pi$ is {\it noncrossing} if $\rho(B)$ and $\rho(B^{\prime})$ have empty intersection for any two distinct blocks $B$ and $B^{\prime}$ of $\pi$. Cutting the $2n$--gon between the integers $\overline{1}$ and $n$ and stretching it along a line, we get a graphical representation of the noncrossing partition $\pi$ where no two arcs cross.
The set of all noncrossing partitions of $[\pm n]$, denoted by $NC^{B}(n)$, is a subposet of $\prod^B(n)$ which is a self--dual and graded lattice
of rank $n$, see \cite{reiner}.

In the following example two noncrossing partitions are depicted.

\begin{example}
The $B_n$-partitions $\pi=\{\{\overline{4},4\},\{\overline{5},1,3\},\{5,\overline{1},\overline{3}\},\{2\},\{\overline{2}\}\}$ and
$\pi'=\{\{\overline{5},1\},$ $\{5,\overline{1}\},\{\overline{4},\overline{3},2\},\{3,4,\overline{2}\}\}$, represented below, are elements of $NC^B(5)$, with $op(\pi)=\{2\}, cl(\pi)=\{2,3,4,5\}, tr(\pi)=\{1\}$, and $op(\pi')=\emptyset, cl(\pi)=\{1,2,4,5\}, tr(\pi')=\{3\}$.

\begin{tabular}{cc}
 \begin{pspicture}(-3.4,-2.5)(3.4,3)
 \degrees[1]
 \multido{\n=0.0+.1}{10}{%
 \uput{1.8}[\n](0,0){$\bullet$}}
 \multido{\n=0.2+-0.1,\i=1+1}{5}{%
 \uput{2.1}[\n](0,0){\i}}
 \multido{\n=0.7+-0.1,\i=1+1}{5}{%
 \uput{2.1}[\n](0,0){$\overline{\i}$}}
 \psline[linewidth=0.5pt]{}(-1.5,1.1)(1.5,-1.1)
 \psline[linewidth=0.5pt]{}(-0.6,1.8)(0.6,1.8)(1.9,0)(-0.6,1.8)
 \psline[linewidth=0.5pt]{}(0.6,-1.8)(-0.6,-1.8)(-1.9,0)(0.6,-1.8)
 \end{pspicture}
&
 \begin{pspicture}(-3.4,-2.5)(3.4,3)
 \degrees[1]
 \multido{\n=0.0+.1}{10}{%
 \uput{1.8}[\n](0,0){$\bullet$}}
 \multido{\n=0.2+-0.1,\i=1+1}{5}{%
 \uput{2.1}[\n](0,0){\i}}
 \multido{\n=0.7+-0.1,\i=1+1}{5}{%
 \uput{2.1}[\n](0,0){$\overline{\i}$}}
 \psline[linewidth=0.5pt]{}(1.9,0)(-1.6,-1.1)(1.6,-1.1)(1.9,0)
 \psline[linewidth=0.5pt]{}(-1.9,0)(1.6,1.1)(-1.6,1.1)(-1.9,0)
 \psline[linewidth=0.5pt]{}(-0.6,1.8)(0.6,1.8)
 \psline[linewidth=0.5pt]{}(0.6,-1.8)(-0.6,-1.8)
 \end{pspicture}
\end{tabular}

\end{example}

Consider now the type $D$ case. Let us label the vertices of a regular $(2n-2)$--gon as $2,3,\ldots,n,\overline{2},\overline{3,}\ldots,\overline{n}$ clockwise, in this order, and label its centroid with both $1$ and $\overline{1}$. Given a $D_n$--partition $\pi$ and a block $B$ of $\pi$, let $\rho(B)$ be the convex hull of the set of vertices labeled with the elements of $B$. Two distinct blocks $B$ and $B^{\prime}$ of $\pi$ are said to cross if $\rho(B)$ and $\rho(B^{\prime})$ do not coincide and one of them contains a point of the other in its relative interior. Note that the case $\rho(B)=\rho(B^{\prime})$ can occur only when $B$ and $B^{\prime}$ are the singletons $\{1\}$ and $\{\overline{1}\}$, and that if $\pi$ has a zero block $B$, then $B$ and the block containing $1$ cross unless $\{1,\overline{1}\}\subseteq B$.

The poset $NC^D(n)$ is defined as the subposet of $\prod^D(n)$ consisting of those $D_n$--partitions $\pi$ with the property that no two blocks of $\pi$ cross. It is a graded lattice of rank $n$. Note that the zero block of $\pi\in NC^D(n)$, if it is eventually present, contains necessarily the
integers $1,\overline{1}$, and at least one more pair $i,\overline{i}$, with $i\neq \pm 1$.

\begin{example}
Below are graphical representations of the noncrossing $D_n$-partitions $\pi=\{\{\pm1,\pm3,\pm4\},\{\overline{5},2\}\{5,\overline{2}\}\}$ and $\pi'=\{\{\overline{1},4,5\},\{1,\overline{4},\overline{5}\},\{2,3\},\{\overline{2},\overline{3}\}\}$. Observe that while $\pi$ is not an element of $NC^B(5)$, $\pi'$ is. This may be checked by moving the integers $1$ and $\overline{1}$ to their respective places in the $2n$-gon and see if some cross occur.

\begin{tabular}{cc}
 \begin{pspicture}(-3.4,-2.5)(3.4,2.5)
 \degrees[1]
 \multido{\n=0.0+.125}{8}{%
 \uput{1.6}[\n](0,0){$\bullet$}}
 \multido{\n=0.250+-0.125,\i=2+1}{4}{%
 \uput{1.9}[\n](0,0){\i}}
 \multido{\n=0.750+-0.125,\i=2+1}{4}{%
 \uput{1.9}[\n](0,0){$\overline{\i}$}}
 \uput[90](-0.3,0.25){$\overline{1},1$}
 \uput[0](-0.2,-0.1){$\bullet$}
 \psline[linewidth=0.5pt]{}(-1.7,0)(1.2,1.2)(1.7,0)(-1.2,-1.2)(-1.7,0)
 \psline[linewidth=0.5pt]{}(-1.3,1.2)(0,1.7)
 \psline[linewidth=0.5pt]{}(1.3,-1.2)(0,-1.7)
 \end{pspicture}
&
 \begin{pspicture}(-3.4,-2.5)(3.4,2.5)
 \degrees[1]
 \multido{\n=0.0+.125}{8}{%
 \uput{1.6}[\n](0,0){$\bullet$}}
 \multido{\n=0.250+-0.125,\i=2+1}{4}{%
 \uput{1.9}[\n](0,0){\i}}
 \multido{\n=0.750+-0.125,\i=2+1}{4}{%
 \uput{1.9}[\n](0,0){$\overline{\i}$}}
 \uput[90](-0.65,0.1){$1$}\uput[90](0.35,-0.3){$\overline{1}$}
 \uput[0](-0.2,-0.1){$\bullet$}
 \psline[linewidth=0.5pt]{}(-1.25,1.2)(0.1,-0.1)(-1.7,0)(-1.25,1.2)
 \psline[linewidth=0.5pt]{}(1.25,-1.2)(0.1,-0.1)(1.7,0)(1.25,-1.2)
 \psline[linewidth=0.5pt]{}(0,1.7)(1.2,1.2)
 \psline[linewidth=0.5pt]{}(0,-1.7)(-1.2,-1.2)
 \end{pspicture}
 \end{tabular}

We have $op(\pi)=\emptyset, cl(\pi)=\{2,4,5\}, tr(\pi)=\{1,3\}$, and
$op(\pi')=\{2\}, cl(\pi')=\{1,3,5\}, tr(\pi)=\{4\}$.
\end{example}

\subsection{Nonnesting partitions}

Let $\Phi$ be the root system of type $\Psi$, with $\Psi=B_n$ or $D_n$.
The root poset of  $\Phi$ is the set $\Phi^+$ of positive roots partially ordered by letting $\alpha\leq\beta$ if $\beta-\alpha$ is in the positive integer span of the positive roots. A {\it nonnesting} partition of type $\Psi$ is just an antichain in $\Phi^+$, i.e. a subset of  $\Phi^+$ consisting of pairwise incomparable elements. We denote by $NN^{\Psi}(n)$ the set of nonnesting $\Psi$--partitions.

A diagram representing a nonnesting $B_n$--partition $\pi$ can be drawn over the ground set $\overline{n}<\ldots<\overline{2}<\overline{1}<0<1<2<\ldots<n$ as follows: for $i,j\in[n]$, we include an arc between $i$ and $j$, and between $\overline{i}$ and $\overline{j}$, if $\pi$ contains the root $e_i-e_j$; an arc between $i$ and $\overline{j}$, and between $\overline{i}$ and $j$ if $\pi$ contains the root $e_i+e_j$; and arcs between $i$ and $0$ and $\overline{i}$ and $0$ is $\pi$ contains the root $e_i$.
The presence of $0$ in the ground set for nonnesting $B_n$--partitions is necessary to correctly represent (when it is eventually present)
the arc between a positive number $i$ an its negative (see \cite{atha}). The chains of successive arcs in the diagram become the
blocks of a $B_n$--partition, after dropping $0$, which is the partition we associate to $\pi$.
This map defines a bijection between nonnesting $B_n$--partitions of $W^{B_n}$ and $B_n$--partitions
whose diagrams, in the above sense, contain no two arcs nested one within the other. We call this diagram the nonnesting graphical representation of $\pi$, to distinguish it from the graphical representation
of the $B_n$--partition associated to $\pi$.

\begin{example}
Below is represented the nonnesting $B_5$-partition $\{\{\pm4,\pm5\},\{\overline{2},3\},\{2,\overline{3}\}\}$:
\begin{center}
\begin{pspicture}(-3,-0.5)(3,1)
\rput*[l]{0}(-3,-0.3){$\overline{5}\quad \overline{4}\quad \overline{3}\quad \overline{2}\quad \overline{1}
\quad 0\quad 1\quad 2\quad 3\quad 4\quad 5$}
\pscurve[]{}(-1.7,0)(-0.2,0.3)(1.5,0)\pscurve[]{}(-1.1,0)(0.2,0.3)(2,0)\pscurve[]{}(-2.9,0)(-2.6,0.2)(-2.3,0)
\pscurve[]{}(-2.3,0)(-1.15,0.5)(0.2,0)\pscurve[]{}(3.3,0)(3,0.2)(2.7,0)\pscurve[]{}(2.7,0)(1.15,0.5)(0.2,0)
\end{pspicture}
\end{center}
\end{example}

The positive roots of $D_n$ are those of $B_n$ other than $e_i$, $1\leq  i \leq n$. The same
rules as before determine the diagram of a nonnesting $D_n$--partition relatively to the ground set
$\overline{n}<\ldots<\overline{3}<\overline{2}<\overline{1},1<2<\ldots<n$, in which the integers $1$ and $\overline{1}$ are not comparable. This means that, for instance, an arc with 1 as endpoint and another one with $\overline{1}$ as endpoint are not considered nested. The chains of successive arcs in the diagram become the blocks of a $D_n$--partition, whose diagrams, in the above sense, contain no two arcs nested one within the other. The zero block, if it is eventually present, contains $1$ if a nesting $\{e_i - e_1, e_i +e_1\}$ appears in $\pi$.

\begin{example}The $D_5$-partition $\{\{\overline{4},1,2\},\{4,\overline{1},\overline{2}\},\{3,5\},\{\overline{3},4\}\}$, represented below is nonnesting:
\begin{center}
\begin{pspicture}(-3,-0.5)(3,1)
\rput*[l]{0}(-3,-0.3){$\overline{5}\quad \overline{4}\quad \overline{3}\quad \overline{2}\quad \overline{1}
 \quad 1\quad 2\quad 3\quad 4\quad 5$}
\pscurve[]{}(-1.6,0)(-2.3,0.3)(-2.9,0)\pscurve[]{}(1.4,0)(2.1,0.3)(2.7,0)
\pscurve[]{}(-2.3,0)(-1,0.5)(0.15,0)\pscurve[]{}(0.15,0)(0.5,0.2)(0.8,0)
\pscurve[]{}(-1.1,0)(-0.8,0.2)(-0.45,0)\pscurve[]{}(2,0)(0.8,0.5)(-0.45,0)
\end{pspicture}
\end{center}
\end{example}

\section{A bijection between nonnesting and noncrossing $D_n$--partitions}

The main constituent of our bijection between the sets $NN^D(n)$ and $NC^D(n)$ is the bijection $$f:NN^B(n)\longrightarrow NC^B(n)$$
between nonnesting and noncrossing $B_n$--partitions presented in \cite{m}, which preserves openers, closers and transients. This map reduces to a bijection between noncrossing and nonnestin partitions of type $A$, and it was also used to obtain the analog bijection for type $C$, see \cite{m1}. Recently, Martin Rubey and Christian Stump \cite{stump1} have also obtained the same bijection using a different construction, and proved that this is (essentially) the unique bijection between nonnesting and noncrossing partitions of type $B$ that preserves openers, closers and transients.

\begin{theorem}[\cite{m,m1,stump1}] \label{t1}
The map $f:NN^B(n)\longrightarrow NC^B(n)$ is a bijection and it preserves openers, closers and transients.
\end{theorem}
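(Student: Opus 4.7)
The plan is to recover the bijection $f$ explicitly and then verify the three preservation properties directly from the construction. I would work in the linear representation of a $B_n$-partition on the ordered ground set $\overline{n},\overline{n-1},\ldots,\overline{1},1,2,\ldots,n$ (with an extra $0$ inserted for the nonnesting side, as in the definitions). The key observation is that once the triple $(op(\pi),cl(\pi),tr(\pi))$ and the extra data ``which blocks meet both halves'' is prescribed, both the nonnesting and the noncrossing $B_n$-partitions with these statistics are uniquely reconstructible, so a candidate for $f$ is forced.

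For the construction, I would first reduce to a greedy matching on the positive half. Since every non-zero block $B$ of a $B_n$-partition comes paired with $-B$, and the zero block (if present) is invariant under negation, specifying the blocks on $[n]$ together with the information ``does a given positive opener get matched to a positive closer, or does it close on the negative side (i.e.\ via an arc $e_i+e_j$ or $e_i$)'' determines the whole partition. I would then read off the openers, closers and transients of $\pi\in NN^B(n)$, together with this extra ``zero-crossing'' data, and form $f(\pi)$ by the classical type-$A$ sweep: process $1,2,\ldots,n$ from left to right, and at each closer or transient, join it to the rightmost currently available opener or transient. This matches the unique noncrossing arrangement with that opener/closer/transient vector. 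On the negative half the construction is dictated by the symmetry $B\mapsto -B$, and the zero block (when present) is placed to contain exactly the pairs $\{i,\overline{i}\}$ corresponding to the zero-crossing arcs recorded from $\pi$.

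To establish that $f$ is a bijection, I would exhibit its inverse by the same greedy rule applied to a noncrossing partition: given $\sigma\in NC^B(n)$, extract $(op(\sigma),cl(\sigma),tr(\sigma))$ and the zero-crossing data, and build the nonnesting diagram on $\overline{n}<\cdots<\overline{1}<0<1<\cdots<n$ by matching each opener/transient at position $i$ with the \emph{leftmost} still-available closer/transient to its right. That both compositions are the identity is immediate from the fact that both the noncrossing and the nonnesting partition with prescribed data are unique. Preservation of openers, closers and transients holds tautologically because the construction of $f$ depends only on those three sets (plus the zero-crossing decoration, which is not part of the statistics).

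The main obstacle is the bookkeeping on the two halves, specifically dealing uniformly with the zero block and with arcs of the form $e_i+e_j$ and $e_i$ that appear in $\Phi^+$ but have no analogue in type $A$. One must check that the greedy matching on the positive half is compatible with the forced matching on the negative half so that no two arcs in the resulting $2n$-gon picture cross, and that the zero block produced has the correct composition. This is precisely the subtle verification carried out in \cite{m}, and it is what \cite{stump1} later reproved while establishing essential uniqueness of the bijection; consequently I would simply invoke those constructions and the attendant verifications to complete the proof.
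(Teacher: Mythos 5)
The paper states Theorem \ref{t1} as an imported result and gives no proof of its own, simply citing \cite{m,m1,stump1}; your proposal likewise defers the substantive type-$B$ verification to those same references, so in the only respect that matters here the two approaches coincide. The sketch you give beforehand --- reconstructing both partitions from the opener/closer/transient data via the stack (rightmost-available) discipline on the noncrossing side and the queue (leftmost-available) discipline on the nonnesting side, with the zero-block and sign bookkeeping governed by the $B\mapsto -B$ symmetry --- is a faithful outline of the canonical bijection of \cite{m,m1} and of the uniqueness phenomenon established in \cite{stump1}, so nothing in it needs repair.
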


The following Lemma is a consequence of the definition of $f$ and Theorem \ref{t1}.

\begin{lemma} \label{l2}
Let $\pi\in NN^B(n)$ and $i\neq \pm 1$. Then
\begin{enumerate}
\item $\pi$ has a zero block if and only if $f(\pi)$ has a zero block;
\item the zero block of $\pi$ contains $1,i$ if and only if the zero block of $f(\pi)$ contains $1,j$ for some $j\neq \pm 1$.
\end{enumerate}
\end{lemma}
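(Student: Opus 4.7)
The approach is to combine Theorem \ref{t1}, which tells us that $f$ preserves the triple $(op(\pi), cl(\pi), tr(\pi))$, with the explicit matching procedure used in the construction of $f$ from \cite{m}. The plan is to argue that both the presence of a zero block and the additional structural information demanded by part (2) are recorded in this triple together with the combinatorics of the matching.

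For part (1), I would begin with a short bookkeeping. In a $B_n$-partition, each positive-only block contributes one element to $op(\pi)$ and one to $cl(\pi)$ (coinciding if the block is a singleton); each pair $\{B, -B\}$ of mutually negated mixed non-zero blocks contributes two elements to $cl(\pi)$ and none to $op(\pi)$; and a zero block $B_0$ contributes exactly one element to $cl(\pi)$, namely $\max(B_0 \cap [n]) = |\min B_0|$. In the construction of $f$, openers are matched to closers along the ground set $[n]$ in a canonical way, and the closer contributed by the zero block is precisely the one left ``unmatched'' by this matching (that is, paired with its own negative rather than with an opener to its left). Since the triple $(op,cl,tr)$ is preserved and the matching depends only on this triple, the existence of such an unmatched closer on the nonnesting side is equivalent to its existence on the noncrossing side, establishing (1).

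For part (2), the statement that $1$ lies in the zero block of $\pi$ is equivalent to the conjunction: $1 \notin op(\pi)$ and $1$ is not a positive-side member of any mixed non-zero block. The latter is again detectable from the matching constructed by $f$: no arc pairs $1$ with a closer of a positive-only block, and $1$ is not paired as the minimum of a mixed block either. Since $f$ preserves $(op, cl, tr)$ and reproduces the same matching, $1$ belongs to the zero block of $f(\pi)$ iff it belongs to the zero block of $\pi$. The condition ``the zero block contains some $i \neq \pm 1$'' is equivalent to $\max(B_0 \cap [n]) > 1$, i.e., the unique zero-block closer is strictly larger than $1$; preservation of $cl(\pi)$ then transfers this condition immediately, giving a $j \neq \pm 1$ in the zero block of $f(\pi)$.

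The main obstacle is linking Theorem \ref{t1} to the explicit structural claim that the zero block is identified through the same ``unmatched closer'' step on both sides. This reduces to extracting, directly from the construction of $f$ in \cite{m}, two observations: that the zero block is precisely characterised by the existence of an unmatched closer, and that the fate of $1$ in the matching is governed entirely by $(op,cl,tr)$. Once these are recorded, both claims of the lemma follow at once.
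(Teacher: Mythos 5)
Your proposal is correct and follows essentially the same route as the paper: both parts ultimately rest on Theorem \ref{t1} together with the fact, taken from the construction of $f$ in \cite{m,m1}, that $f$ preserves the presence of a zero block and the membership of $1$ in it. The additional claims you make about the ``canonical matching'' (that the zero block is detected by an unmatched closer, and that the fate of $1$ is governed entirely by the triple) are exactly the content the paper disposes of with ``follows immediately from the definition of $f$'', and your final step --- the zero-block closer exceeds $1$, so $1\notin cl(\pi)$ and hence $1\notin cl(f(\pi))$ --- is the paper's observation that $1$ must be a transient of the zero block, phrased contrapositively.
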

\begin{proof}
$(1)$ and the fact that 1 is in the zero block of $\pi$ if and only if it is in the zero block of $f(\pi)$  follow immediately from the definition of $f$ given in \cite{m,m1}. It is easy to see that if $1,i$ belong to the zero block of $\pi$ then $1$ must necessarily be a transient, and since $f$ preserves openers, closers and transients, then $1$ must also be a transient in the zero block of $f(\pi)$. Since the greatest element of the zero block is a closer, there is at least one other positive integer $j$ in that zero block.
\end{proof}

% \bigskip

We now point out the relation between the two posets $NN^D(n)$ and $NN^B(n)$. \\
The only $B_n$ partitions $\pi$ in $NN^D(n)$ which are not in $NN^B(n)$ are those having a nesting between an arc with endpoint 1 and another with endpoint $\overline{1}$. There are only two kinds of $D_n$--partitions in $NN^D(n)$ for which this can happen:
\begin{enumerate}
\item $D_n$--partitions $\pi$ with no zero block and having a block $B$ with entries $\overline{j},1,i$ in a row, where $1<i<j$;
\item $D_n$--partitions with zero block having integers $1,k$ in a row such that $1<k$, and with at least one block with integers
$\overline{i},j$ in a row, where $1<i<j<k$.
\end{enumerate}
All other $D_n$--partition in $NN^D(n)$ are also in $NN^B(n)$.

\begin{example}\label{nnDmenosnnB}
To exemplify the first case, consider the nonnesting $D_7$-partition \\
$\pi=\{\{\overline{6},1,4,7\},\{6,\overline{1},\overline{4},\overline{7}\},\{\overline{3},2\},\{3,\overline{2}\}\}$, represented below, noticing that $\pi$ is not in $NN^B(7)$, since the arc linking $\overline{1}$ and $6$ nest the arc linking $1$ and $4$.

\begin{center}
\begin{pspicture}(-7,-0.5)(6,2.5)
\rput*[Bl]{0}(-7,-0.3){$\overline{7}\qquad\overline{6}\qquad\overline{5}\qquad \overline{4}\qquad \overline{3}\qquad \overline{2}\qquad \overline{1}
 \qquad 1\qquad 2\qquad 3\qquad 4\qquad 5\qquad 6\qquad 7$}
\pscurve[]{}(-7,0.2)(-5.5,0.6)(-3.8,0.2)
\pscurve[]{}(-3.8,0.2)(-2.3,0.6)(-0.7,0.2)
\pscurve[]{}(-0.7,0.2)(2.8,1.5)(5.5,0.2)

\pscurve[]{}(6.7,0.2)(5.2,0.6)(3.4,0.2)
\pscurve[]{}(3.4,0.2)(2,0.6)(0.4,0.2)
\pscurve[]{}(0.4,0.2)(-3.1,1.5)(-5.8,0.2)

\pscurve[]{}(-1.7,0.2)(0.2,2)(2.3,0.2)
\pscurve[]{}(1.2,0.2)(-0.5,2)(-2.8,0.2)
\end{pspicture}
\end{center}

An example for the remaining possibility for a $B_7$-partition to be in $NN^D(7)$ but not in $NN^B(7)$ is
$\eta=\{\{\pm1,\pm7\},\{\overline{2},5\},\{2,\overline{5}\},\{\overline{3},4\},\{3,\overline{4\}}\}$:

\begin{center}
\begin{pspicture}(-7,-0.5)(6,3)
\rput*[Bl]{0}(-7,-0.3){$\overline{7}\qquad\overline{6}\qquad\overline{5}\qquad \overline{4}\qquad \overline{3}\qquad \overline{2}\qquad \overline{1}
 \qquad 1\qquad 2\qquad 3\qquad 4\qquad 5\qquad 6\qquad 7$}
\pscurve[]{}(-7,0.2)(-4,0.7)(-0.7,0.2)
\pscurve[]{}(-7,0.2)(-3,1.3)(0.2,0.2)

\pscurve[]{}(7,0.2)(4,0.7)(0.2,0.2)
\pscurve[]{}(7,0.2)(3,1.3)(-0.7,0.2)

\pscurve[]{}(-1.7,0.2)(1.3,2.7)(4.4,0.2)
\pscurve[]{}(3.3,0.2)(0,2)(-2.8,0.2)

\pscurve[]{}(1.3,0.2)(-1.3,2.7)(-4.8,0.2)
\pscurve[]{}(-3.7,0.2)(-0.5,2)(2.3,0.2)
\end{pspicture}
\end{center}
The $B_7$--partition $\eta$ is not an element of $NN^B(7)$ since in the nonnesting graphical representation of $\eta$, the arc linking 0 and 1 is nested by the arc linking $\overline{2}$ and 5.

\end{example}

Given a block $B$ of a $B_n$--partition $\pi$, we write the elements of $B$ by increasing order, and $B=B^n\cup B^p$, where $B^n$, respectively $B^p$, is the subset of $B$ formed by all its negative, respectively positive, integers. Finally, we denote by $s(B)$ the least positive integer of $B$ and by
$w(B)=s(-B^{n})$, i.e. the absolute value of the largest negative entry of $B$; obviously, if $B$ has only positive integers then $s(B)$ is the opener of this block.

\begin{definition} \label{D:1}
We now design a map $\iota$ of $NN^D(n)$ into $NN^B(n)$, for any $n \geq 2$.
As one would expect, $\iota$ is just the identity for all partitions $\pi \in NN^D(n) \cap NN^B(n)$,
but otherwise a very careful attention is required.

Construct the map
$$\iota:NN^D(n)\longrightarrow NN^B(n)$$
as a three step definition as follows. Let $\pi=\{B_{0},B_1,-B_1,B_2,-B_2,\ldots,B_m,-B_m\}\in NN^D(n)$, where $B_{0}$ is the zero block.

\begin{enumerate}
\item If $B_{0}=\emptyset$, $1 \in B_{1}$ and $1  < s(B_{1} \setminus \{1\})< w(B_{1})$,
then set
$$\iota(\pi):=\{C_0,C_1,-C_1,B_2,-B_2,\ldots,B_m,-B_m\},$$ where
$C_0=\pm B_1^p\setminus\{1\}$ and $C_1=-B_1^n\cup\{1\}$.
\item If $1 \in B_{0}$ and
$B_{\ell}$ for $\ell=1,\ldots,s \leq m$, are the blocks having both positive and negative entries with
$1 < w(B_{s}) < \cdots < w(B_{1}) < s(B_{1}) < \cdots < s(B_{s})$, then set
$$\iota(\pi)=\{C_{0},C_1,-C_1,\ldots,C_s,-C_s,B_{s+1},-B_{s+1},\ldots B_m,-B_m\},$$
where
\begin{align*}
C_{0}&=\pm B_s^p\\
C_1&=-B_{s-1}^p\cup B_{0}^{p}\\
C_2&=B_2^n\cup -B_1^n\\
C_3&=B_3^n\cup B_1^p\\
&\vdots\\
C_{s-1}&=B_{s-1}^n\cup B_{s-3}^p\text{ and}\\
C_{s}&=B_s^n\cup B_{s-2}^p.
\end{align*}
\item Finally, set $\iota(\pi)=\pi$ if $\pi$ is not of type $(1)$ nor $(2)$, (viz. $\pi \in NN^D(n) \cap NN^B(n)$).
\end{enumerate}
\end{definition}

\begin{example}
The image, and respective graphical representation, of the nonnesting $D_7$-partitions $\pi$ and $\eta$, considered in example \ref{nnDmenosnnB}, by the map $\iota$ are, respectively:
$\iota(\pi)=\{\{\pm 4,\pm7\},\{1,6\},\{\overline{1},\overline{6}\},\{\overline{3},2\},\{3,\overline{2}\}\}:$

\begin{center}
\begin{pspicture}(-8,-0.5)(6,2.5)
\rput*[Bl]{0}(-8,-0.3){$\overline{7}\qquad\overline{6}\qquad\overline{5}\qquad \overline{4}\qquad \overline{3}\qquad \overline{2}\qquad \overline{1}\qquad 0
 \qquad 1\qquad 2\qquad 3\qquad 4\qquad 5\qquad 6\qquad 7$}
\pscurve[]{}(-8,0.2)(-6.5,0.6)(-4.8,0.2)\pscurve[]{}(-4.8,0.2)(-2.3,0.7)(-0.65,0.2)
%\pscurve[]{}(-4.8,0.2)(-3.3,0.6)(-1.7,0.2)
%\pscurve[]{}(-0.7,0.2)(2.8,1.5)(5.5,0.2)

\pscurve[]{}(6.5,0.2)(5.1,0.6)(3.4,0.2)\pscurve[]{}(3.4,0.2)(1.5,0.7)(-0.65,0.2)
%\pscurve[]{}(3.4,0.2)(2,0.6)(0.4,0.2)
%\pscurve[]{}(0.4,0.2)(-3.1,1.5)(-5.8,0.2)

\pscurve[]{}(-2.7,0.2)(-0.2,2.3)(2.3,0.2)
\pscurve[]{}(1.3,0.2)(-1,2.3)(-3.8,0.2)

\pscurve[]{}(-6.8,0.2)(-4,2)(-1.7,0.2)
\pscurve[]{}(5.5,0.2)(3.2,2)(0.4,0.2)
\end{pspicture}
\end{center}

\noindent and
$\iota(\eta)=\{\{\pm5\},\{\overline{4},1,7\},\{4,\overline{1},\overline{7}\},\{\overline{2},3\},\{2,\overline{3\}}\}:$

\begin{center}
\begin{pspicture}(-8,-0.5)(6,2.5)
\rput*[Bl]{0}(-8,-0.3){$\overline{7}\qquad\overline{6}\qquad\overline{5}\qquad \overline{4}\qquad \overline{3}\qquad \overline{2}\qquad \overline{1}\qquad 0
 \qquad 1\qquad 2\qquad 3\qquad 4\qquad 5\qquad 6\qquad 7$}
\pscurve[]{}(-8,0.2)(-4.8,0.6)(-1.7,0.2)
\pscurve[]{}(3.4,0.2)(1,0.6)(-1.7,0.2)
%\pscurve[]{}(-0.7,0.2)(2.8,1.5)(5.5,0.2)(-4.8,0.2)

\pscurve[]{}(6.5,0.2)(3.1,0.6)(0.4,0.2)
\pscurve[]{}(0.4,0.2)(-2,0.6)(-4.8,0.2)
%\pscurve[]{}(0.4,0.2)(-3.1,1.5)(-5.8,0.2)

\pscurve[]{}(-2.7,0.2)(-0.2,2.3)(2.3,0.2)
\pscurve[]{}(1.3,0.2)(-1,2.3)(-3.8,0.2)

\pscurve[]{}(-5.8,0.2)(-3,2)(-0.65,0.2)
\pscurve[]{}(4.5,0.2)(2.4,2)(-0.65,0.2)
\end{pspicture}
\end{center}
\end{example}

\begin{theorem}\label{t2}
The map $\iota:NN^D(n)\longrightarrow NN^B(n)$ is an injection.
Moreover, the set $\iota(NN^D(n))$ is made by the nonnesting $B_n$--partitions having zero blocks, when they are present, containing $1$ and at least one more positive integer, and by the nonnesting $B_n$--partitions having a zero block without 1 and such that 1 belongs to a block $B$ for which either
$B\setminus\{1\}$ contains only positive integers or it contains both positive and negative integers.
\end{theorem}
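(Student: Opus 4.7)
The plan is to establish the theorem in three steps: well-definedness of $\iota$ into $NN^B(n)$, injectivity, and the image characterization.

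For well-definedness, only cases (1) and (2) of Definition~\ref{D:1} need attention, case (3) being the identity. In each case I would check that $\iota(\pi)$ is a valid $B_n$-partition (closed under negation, at most one zero block) and that its nonnesting $B_n$-diagram has no two nested arcs. The content is that the only $B_n$-nesting in the original diagram of $\pi$ arises from arcs with endpoints at $\pm 1$, which was permissible in the $D_n$-diagram because $1$ and $\bar{1}$ are incomparable there. The recombination breaks precisely this nesting: in case (1), relocating the positive part of $B_1$ (minus $1$) into a new zero block $C_0$ leaves $C_1$ entirely in the positive half and $-C_1$ in the negative half; in case (2), the cyclic shuffle uses the hypothesis $1 < w(B_s) < \cdots < w(B_1) < s(B_1) < \cdots < s(B_s)$ to ensure that no new nestings are introduced.

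For injectivity, the three cases produce images with mutually exclusive signatures. A case-(3) output lies in $NN^D(n) \cap NN^B(n)$. A case-(1) output has a zero block not containing $1$, while $1$ sits in a block of solely positive integers; such outputs lie outside $NN^D(n)$, because the new zero block $C_0 = \pm B_1^p \setminus \{1\}$ is not realisable as a zero block of a $D_n$-antichain compatible with the other blocks. A case-(2) output has a zero block not containing $1$ while $1$ sits in a block with both positive and negative entries, and again lies outside $NN^D(n)$ by the analogous obstruction. These three situations are therefore pairwise disjoint. Within each case the formulas of Definition~\ref{D:1} are directly invertible: in case (1), $B_1^p = C_0^p \cup \{1\}$ and $B_1^n = -(C_1 \setminus \{1\})$; in case (2), $B_s^p = C_0^p$, $B_0^p = C_1^p$, and the remaining $B_i$ are recovered by reading off the positive and negative parts of $C_2, \ldots, C_s$ in reverse cyclic order, with $s$ determined by counting the blocks involved.

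For the image characterization, the forward inclusion is immediate from the signature analysis together with Lemma~\ref{l2}. For the reverse inclusion, given $\tau \in NN^B(n)$ satisfying the described conditions, I would construct a preimage based on the position of $1$. If $\tau$ has no zero block, or the zero block contains $1$ and at least one further positive integer, I would show that $\tau \in NN^D(n) \cap NN^B(n)$ and take the preimage to be $\tau$ itself; this uses that nesting in $D_n$ is a strictly weaker condition than in $B_n$ around $\pm 1$, so that the zero-block hypothesis upgrades membership in $NN^B(n)$ to membership in $NN^D(n)$. Otherwise $\tau$ has a zero block not containing $1$, and depending on whether the block containing $1$ is purely positive or has entries of both signs, I would reverse the case-(1) or case-(2) formulas of Definition~\ref{D:1} and verify that the reconstructed $\pi$ is a nonnesting $D_n$-partition. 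The main obstacle is this reverse inclusion for case (2): from $\tau$ alone one must identify the cyclic decomposition $C_0, C_1, -C_1, \ldots, C_s, -C_s$, determine $s$, and verify the ordering $w(B_s) < \cdots < w(B_1) < s(B_1) < \cdots < s(B_s)$ on the reconstructed blocks. This amounts to showing that the nonnesting hypothesis on $\tau$ in type $B$ translates precisely into these inequalities among the extremal elements of the $B_j$, which is where the delicate combinatorics of $\iota$ really comes into play.
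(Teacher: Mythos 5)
Your proposal follows essentially the same route as the paper's own proof: a case-by-case check that the outputs of cases (1) and (2) of Definition~\ref{D:1} are nonnesting $B_n$--partitions carrying mutually exclusive signatures (zero block without $1$, with the block of $1$ purely positive in case (1) and mixed in case (2), versus case (3) landing in $NN^D(n)\cap NN^B(n)$), followed by injectivity and the image characterization via explicit inversion of the defining formulas. The paper's argument is in fact terser than yours --- it simply asserts that the construction reverses unambiguously --- so your flagging of the case-(2) inversion and of the reverse inclusion as the delicate points is consistent with, and slightly more explicit than, what the paper records.
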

\begin{proof}
Given $\pi\in NN^D(n)$, consider its image $\iota(\pi)$. If $\pi$ is of type $(1)$, then  $\{C_0,C_1,-C_1\}$ is clearly a nonnesting $B_n$--partition, and therefore the whole $\iota(\pi)$ is also a nonnesting $B_n$--partition. Note that $\iota(\pi)$ has a zero block without 1, and this integer is the opener of the block $C_1$, which contains only positive integers and it has cardinality strictly greater than $1$.

If $\pi$ is of type $(2)$ then the situation is similar, since by its construction, the set $\{C_{0},C_1,-C_1,\ldots,C_s,-C_s\}$ is a nonnesting
$B_n$--partition. Since $\pi\in NN^D(n)$ it follows that the whole $\iota(\pi)$ is a nonnesting $B_n$--partition
having zero block not containing 1, and this integer is a transient of the block $C_1$. It is immediate by definition that $C_1\setminus\{1\}$ contains  both positive and negative integers.

Moreover, it is not hard to see that $\iota$ is one--to--one. In fact, going backwards, for any element $\pi^{\prime}$ of $NN^B(n)$ having a zero block without 1 and such that 1 belongs to a block $B$ for which either $B\setminus\{1\}$ contains only positive integers or it contains both positive and negative integers, the inverse of the above construction for the cases $(1)$ and $(2)$ gives unambiguously a nonnesting $D_n$--partition, whose image by $\iota$ is exactly $\pi^{\prime}$. Therefore $\iota:NN^D(n)\longrightarrow NN^B(n)$ is an injection.
\end{proof}

The following result is a consequence of the construction of the map $\iota$.

\begin{corollary} \label{iotap}
The map $\iota:NN^D(n)\longrightarrow NN^B(n)$ preserves openers, closers and transients, except for the nonnesting $D_n$--partitions of type $(1)$ in Definition \ref{D:1}, for which we have
$$\begin{cases}
op(\iota(\pi)) &= op(\pi)\bigcup\{1\} \\
cl(\iota(\pi)) &= cl(\pi) \\
tr(\iota(\pi)) &= tr(\pi)\setminus\{1\}
\end{cases}.$$\qed
\end{corollary}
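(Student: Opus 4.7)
The proof splits according to the three cases in Definition \ref{D:1}. Case (3), where $\iota(\pi) = \pi$, is immediate. In cases (1) and (2), only the blocks explicitly named in the definition are modified, while all remaining blocks are carried over to $\iota(\pi)$ unchanged and hence contribute identically to $op$, $cl$, $tr$. Thus it suffices to compare, in each case, the openers, closers, and transients produced by the modified blocks on each side.

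For case (1), the pair $\{B_1, -B_1\}$ is replaced by $\{C_0, C_1, -C_1\}$, where $C_0 = \pm(B_1^p \setminus \{1\})$ is a zero block and $C_1 = -B_1^n \cup \{1\}$ is positive--only; the hypothesis $1 < s(B_1 \setminus \{1\}) < w(B_1)$ guarantees $|B_1^p| \geq 2$ and $|B_1^n| \geq 1$. Since $B_1$ is mixed, the closers contributed by $\{B_1, -B_1\}$ in $\pi$ are $\max B_1^p$ and $\max|B_1^n|$, while the integer $1$ is a transient (neither an opener nor a closer, since $B_1$ contains negative integers as well). In $\iota(\pi)$, the symmetric block $C_0$ contributes the single closer $\max(B_1^p \setminus \{1\}) = \max B_1^p$, the positive--only block $C_1$ has opener $1$ and closer $\max(-B_1^n) = \max|B_1^n|$, and the negative--only block $-C_1$ contributes no closers. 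Thus the closer set is unchanged, $1$ becomes an opener, and $1$ is removed from transients, yielding exactly the formulas stated in the corollary.

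For case (2), the blocks $\{B_0, B_1, -B_1, \ldots, B_s, -B_s\}$ are replaced by $\{C_0, C_1, -C_1, \ldots, C_s, -C_s\}$. The crucial structural observation, immediate from the definition, is that every $C_i$ is mixed (either the zero block $C_0$ or a block containing both positive and negative entries), so no new opener is produced and $op(\iota(\pi)) = op(\pi)$. For the closers, a direct reading of the formulas shows the pair of extrema contributed by each $\{C_i, -C_i\}$: $C_0$ contributes $\max B_s^p$; $\{C_1, -C_1\}$ contributes $\max B_0^p$ and $\max B_{s-1}^p$; $\{C_2, -C_2\}$ contributes $\max|B_1^n|$ and $\max|B_2^n|$; and for $3 \leq i \leq s$, $\{C_i, -C_i\}$ contributes $\max B_{i-2}^p$ and $\max|B_i^n|$. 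Summing, the total set of closers produced by the modified blocks equals
\[
\{\max B_i^p : 0 \leq i \leq s\} \cup \{\max|B_j^n| : 1 \leq j \leq s\},
\]
which coincides exactly with the closer contribution of $\{B_0, B_1, -B_1, \ldots, B_s, -B_s\}$ in $\pi$. Transients then agree by taking complements in $[n]$.

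The main (and essentially only) obstacle is the bookkeeping in case (2): one must carefully track how the positive and negative parts of the $B_i$ are recombined into the $C_i$ and verify that the multiset of ``extreme positives'' $\max B_i^p$ and ``extreme negative absolute values'' $\max|B_j^n|$ is merely permuted across the new arrangement. Once the key observation that every $C_i$ is mixed is established, the rest of the proof is a mechanical verification, and case (1) follows an analogous but simpler pattern.
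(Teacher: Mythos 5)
Your proof is correct: the case-by-case verification that the recombined blocks $C_i$ contribute the same closers (and, except for the integer $1$ in case (1), the same openers) is exactly the computation the paper leaves implicit, since the paper states this corollary with no proof beyond the remark that it is ``a consequence of the construction of the map $\iota$.'' Your write-up simply makes that consequence explicit, and the bookkeeping in both cases checks out.
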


\begin{corollary}
The set $\left(f\circ\iota\right)\left(NN^D(n)\right) \subset NC^B(n)$ is made by the noncrossing $B_n$--partitions whose zero blocks, when they are present, contain $1$ and at least one more  positive integer, and by the noncrossing $B_n$--partitions having a zero block without 1 and such that 1 belongs to a block $B$ for which either
$B\setminus\{1\}$ contains only positive integers or it contains both positive and negative integers.
\end{corollary}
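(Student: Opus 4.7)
The plan is to deduce this corollary directly from Theorem \ref{t2}, transporting its characterization of the set $\iota(NN^D(n))\subset NN^B(n)$ across the bijection $f$. The two ingredients needed are Theorem \ref{t1} (which says that $f$ preserves openers, closers, and transients) and Lemma \ref{l2} (which says that $f$ preserves the existence of a zero block, and the property that the zero block contains $1$ together with some element $\neq \pm 1$; since zero blocks are invariant under negation, this is equivalent to the zero block containing $1$ together with some other positive integer).

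The key preliminary observation is that the block-structural conditions appearing in Theorem \ref{t2} can be rephrased in terms of the opener/closer/transient status of the integer $1$. A short case analysis on whether the block $B$ containing $1$ consists of positive integers only, has elements of both signs, or is the singleton $\{1\}$, shows that ``$B\setminus\{1\}$ contains only positive integers, or it contains both positive and negative integers'' is equivalent to ``$1$ is an opener or a transient'' (equivalently, $1$ is not solely a closer). Analogously, ``the zero block contains $1$ and at least one further positive integer'' forces $1$ to be a transient lying in the zero block.

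Given this translation, both inclusions are routine. For the forward direction, any $\pi\in NN^D(n)$ produces $\iota(\pi)\in NN^B(n)$ satisfying the conditions of Theorem \ref{t2}; applying $f$, the preservation properties imply that $f(\iota(\pi))$ still satisfies the opener/closer/transient translation of those conditions, which by the reformulation above is exactly the stated structural condition on the noncrossing side. For the reverse direction, any $\sigma\in NC^B(n)$ satisfying the stated conditions pulls back via $f^{-1}$ to an element of $NN^B(n)$ satisfying the conditions of Theorem \ref{t2} (by the same preservation results applied in reverse), so $f^{-1}(\sigma)\in\iota(NN^D(n))$ and hence $\sigma\in (f\circ\iota)(NN^D(n))$. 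The only real subtlety is this opener/closer/transient reformulation, together with the small observation, needed to line up with Lemma \ref{l2}(2), that an element different from $\pm 1$ lying in a negation-invariant block is equivalent to the presence of a second positive integer in that block.
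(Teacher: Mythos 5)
Your proof is correct and follows essentially the same route as the paper: both transport the characterization of $\iota(NN^D(n))$ from Theorem \ref{t2} across $f$, using Lemma \ref{l2} for the zero-block conditions and the preservation of openers, closers and transients for the condition on the block containing $1$. The paper leaves the translation of the block-structural conditions into the opener/closer/transient status of $1$ implicit, whereas you spell it out; this is a difference of detail, not of method.
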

\begin{proof}
By Lemma \ref{l2}, the map $f$ transforms nonnesting $B_n$--partition with zero block having 1 and at least one other positive element into noncrossing $B_n$--partition having the same properties, and therefore also nonnesting $B_n$--partition with zero block without 1 into noncrossing $B_n$--partition with zero block without 1. Since $f$ preserves openers, closers and transients, the desired result follows.
\end{proof}

To fully present the bijection between nonnesting and noncrossing $D_n$--partitions we need to explicitly investigate the relation between the two posets
$\left(f\circ\iota\right)\left(NN^D(n)\right)$, which is a subset of $NC^B(n)$, and $NC^D(n)$. A careful examination at the notion of noncrossing $B_n$ and $D_n$--partitions shows that
if $\pi\in NC^B(n)$ but $\pi\notin NC^D(n)$ then only one of the following 3 mutually exclusive possibilities can occur:
\begin{enumerate}
\item $\pi$ has a nonzero block which contains only the integers $\pm 1$,
\item $\pi$ has a zero block which does not contain 1,
\item $\pi$ has two blocks $B$ and $B^{\prime}$ such that the first one contains integers $1,i$ in a row and the second one contains integers $\overline{k},j$, with either $\overline{k}<1<i<j$ or $\overline{k}<i<1<j$.
\end{enumerate}
Note that in the last case, notwithstanding the name \emph{noncrossing}, when considering the corresponding $(2n-2)$--gon the convex hulls $\rho(B)$
and $\rho(B^{\prime})$ actually cross.

We now design a map $\xi$ of $\left(f\circ\iota\right)\left(NN^D(n)\right)$ into $NC^D(n)$, for any $n \geq 2$.
As one would expect, $\xi$ is just the identity for all partitions
$\pi \in \left(f\circ\iota\right)\left(NN^D(n)\right) \cap NC^D(n)$, but otherwise a very careful attention is required.

\begin{definition} \label{D:3}
Construct the map
$$\xi:f\circ\iota(NN^D(n))\longrightarrow NC^D(n)$$
as a six step definition as follows. Let $\pi=\{B_{0},B_1,-B_1,\ldots,B_m,-B_m\}\in f\circ\iota(NN^D(n))$, whereth $B_{0}$ is the zero block.

\begin{enumerate}
\item
If $1 \notin B_{0} \neq \emptyset$, $1 \in B_{1}$ and $B_1 \setminus \{1\}$ is not empty and it contains both positive and negative integers, then set
$$\xi(\pi)=\{C_{0},C_1,-C_1,B_2,-B_2,\ldots,B_m,-B_m\},$$
where
$C_{0}=B_{0}\cup\{\pm 1\}$ and $C_1=B_1\setminus\{1\}$.

\item
If $B_{0}=\emptyset$, $1 \in B_{1}$ and $B_1 \setminus \{1\}$ is not empty and it contains both positive and negative integers,
and there is at least one more block containing both positive and negative integers, then assume $B_2$ is the block with the largest number $s(B)$ among all blocks containing both positive and negative integers. Set
$$\xi(\pi)=\{C_1,-C_1,C_2,-C_2,B_3,-B_3\ldots,B_m,-B_m\},$$
where
$C_1=B_1\setminus\{1\}$ and $C_2=B_2\cup\{1\}$.

\item
Suppose $B_{0}=\emptyset$, $1 \in B_{1}$ and $B_1 \setminus \{1\}$ is not empty and it contains only positive integers, and there is at least one block having both positive and negative integers. Let $B_2,B_3,\ldots,B_k,-B_k,\ldots,-B_3,-B_2$ be the collection of all blocks having both positive and negative integers by increasing order of their $s(B)$ numbers. Then set
$$\xi(\pi)=\{C_1,-C_1,\ldots,C_k,-C_k,B_{k+1},-B_{k+1}\ldots,B_m,-B_m\},$$
where
\begin{align*}
C_1&=B_1\setminus\{1\}\cup B_2^n\\
C_2&=B_2^p\cup B_3^n\\
&\vdots\\
C_{k-1}&=B_{k-1}^p\cup B_k^n\text{ and}\\
C_k&=B_k^p\cup\{1\}.
\end{align*}

\item
Similarly to case $(3)$, assume $B_{0}=\emptyset$,
$1 \in B_{1}$ and $B_1 \setminus \{1\}$ is not empty and it contains only negative integers, and there is at least one block having both positive and negative integers. Let $B_2,B_3,\ldots,B_k,-B_k,\ldots,-B_3,-B_2$ be the collection of all blocks having both positive and negative integers by increasing order of their $s(B)$ numbers. Then set
$$\xi(\pi)=\{C_1,-C_1,\ldots,C_k,-C_k,B_{k+1},-B_{k+1}\ldots,B_m,-B_m\},$$
where
\begin{align*}
C_1&=B_1\setminus\{1\}\cup B_2^p\\
C_2&=B_2^n\cup B_3^p\\
&\vdots\\
C_{k-1}&=B_{k-1}^n\cup B_k^p\text{ and}\\
C_k&=B_k^n\cup\{1\}.
\end{align*}

\item
Assume $1 \notin B_{0} \neq \emptyset$, $1 \in B_{1}$ and the block $B_1$ contains only positive integers, and there is at least one block having both positive and negative integers. Let
$B_2,B_3,\ldots,B_k,-B_k,\ldots,-B_3,-B_2$ be the collection of all blocks having both positive and negative integers by increasing order of their $s(B)$ numbers. Then set
$$\xi(\pi)=\{C_1,-C_1,\ldots,C_k,-C_k,B_{k+1},-B_{k+1}\ldots,B_m,-B_m\},$$
where
\begin{align*}
C_1&=B_1\setminus\{1\}\cup B_2^n\\
C_2&=B_2^p\cup B_3^n\\
&\vdots\\
C_{k-1}&=B_{k-1}^p\cup B_k^n\text{ and}\\
C_k&=B_k^p\cup\{-1\}\cup B_0^n.
\end{align*}

\item
Finally, set $\xi(\pi)=\pi$ otherwise, i.e., if $\pi\in\left(f\circ\iota\right)\left(NN^D(n)\right) \cap NC^D(n)$.
\end{enumerate}
\end{definition}

\begin{example}See below instances of the application of the map $\xi$ to the first five cases of definition \ref{D:3}.

\noindent Case 1.

\begin{tabular}{cc}
\begin{pspicture}(-3.4,-2.5)(3.4,2.5)
 \degrees[1]
 \multido{\n=0.0+.1}{10}{%
 \uput{1.6}[\n](0,0){$\bullet$}}
 \multido{\n=0.2+-0.1,\i=1+1}{5}{%
 \uput{1.9}[\n](0,0){\i}}
 \multido{\n=0.7+-0.1,\i=1+1}{5}{%
 \uput{1.9}[\n](0,0){$\overline{\i}$}}
 %
 %\uput[90](-0.3,0.25){$\overline{1},1$}
 \uput[0](3.4,0){$\overset{\xi}{\longrightarrow}$}
 \psline[linewidth=0.5pt]{}(-1.7,0)(-1.4,1)(1.7,0)(1.4,-1)(-1.7,0)
 \psline[linewidth=0.5pt]{}(-0.6,1.6)(0.6,1.6)(1.4,1)
\psline[linewidth=0.5pt]{}(0.6,-1.6)(-0.6,-1.6)(-1.4,-1)
 \end{pspicture}
&
 \begin{pspicture}(-3.4,-2.5)(3.4,2.5)
 \degrees[1]
 \multido{\n=0.0+.125}{8}{%
 \uput{1.6}[\n](0,0){$\bullet$}}
 \multido{\n=0.125+-0.125,\i=2+1}{4}{%
 \uput{1.9}[\n](0,0){\i}}
 \multido{\n=0.625+-0.125,\i=2+1}{4}{%
 \uput{1.9}[\n](0,0){$\overline{\i}$}}
 \uput[90](-0.4,-0.05){$\overline{1}$}\uput[90](0.1,-0.1){$1$}
 \uput[0](-0.2,-0.1){$\bullet$}
 \psline[linewidth=0.5pt]{}(-1.25,1.2)(-1.7,0)(-1.25,1.2)(1.7,0)(1.25,-1.2)(-1.7,0)
% \psline[linewidth=0.5pt]{}(1.25,-1.2)(0.1,-0.1)(1.7,0)(1.25,-1.2)
 \psline[linewidth=0.5pt]{}(0,1.7)(1.2,1.2)
 \psline[linewidth=0.5pt]{}(0,-1.7)(-1.2,-1.2)
 \end{pspicture}
 \end{tabular}
%%%%%%%%%%%%%%%%%%%%%%%%%%%%%%%%%%%%%%%%%%%%%%%%%%%%%%%%%%%%%%%%%%%%%%%%%%%%%%%%%%%%

\noindent Case 2.

\begin{tabular}{cc}
\begin{pspicture}(-3.4,-2.5)(3.4,2.5)
 \degrees[1]
 \multido{\n=0.0+.1}{10}{%
 \uput{1.6}[\n](0,0){$\bullet$}}
 \multido{\n=0.2+-0.1,\i=1+1}{5}{%
 \uput{1.9}[\n](0,0){\i}}
 \multido{\n=0.7+-0.1,\i=1+1}{5}{%
 \uput{1.9}[\n](0,0){$\overline{\i}$}}
 %
 %\uput[90](-0.3,0.25){$\overline{1},1$}
 \uput[0](3.4,0){$\overset{\xi}{\longrightarrow}$}
 \psline[linewidth=0.5pt]{}(-1.7,0)(1.4,-1)
 \psline[linewidth=0.5pt]{}(-1.4,1)(1.7,0)
 \psline[linewidth=0.5pt]{}(-0.6,1.6)(0.6,1.6)(1.4,1)
\psline[linewidth=0.5pt]{}(0.6,-1.6)(-0.6,-1.6)(-1.4,-1)
 \end{pspicture}
&
 \begin{pspicture}(-3.4,-2.5)(3.4,2.5)
 \degrees[1]
 \multido{\n=0.0+.125}{8}{%
 \uput{1.6}[\n](0,0){$\bullet$}}
 \multido{\n=0.125+-0.125,\i=2+1}{4}{%
 \uput{1.9}[\n](0,0){\i}}
 \multido{\n=0.625+-0.125,\i=2+1}{4}{%
 \uput{1.9}[\n](0,0){$\overline{\i}$}}
 \uput[90](-0.3,-0.3){$\overline{1}$}\uput[90](-0.3,0.3){$1$}
 \uput[0](-0.3,0){$\bullet$}
 \psline[linewidth=0.5pt]{}(-1.25,1.2)(1.7,0)(0,0)(-1.25,1.2)
 \psline[linewidth=0.5pt]{}(1.25,-1.2)(-1.7,0)(0,0)(1.25,-1.2)
 \psline[linewidth=0.5pt]{}(0,1.7)(1.2,1.2)
 \psline[linewidth=0.5pt]{}(0,-1.7)(-1.2,-1.2)
 \end{pspicture}
 \end{tabular}
%%%%%%%%%%%%%%%%%%%%%%%%%%%%%%%%%%%%%%%%%%%%%%%%%%%%%%%%%%%%%%%%%%%%%%%%%%%%%%%%%

\noindent Case 3.

\begin{tabular}{cc}
\begin{pspicture}(-3.4,-2.5)(3.4,2.5)
 \degrees[1]
 \multido{\n=0.0+.1}{10}{%
 \uput{1.6}[\n](0,0){$\bullet$}}
 \multido{\n=0.2+-0.1,\i=1+1}{5}{%
 \uput{1.9}[\n](0,0){\i}}
 \multido{\n=0.7+-0.1,\i=1+1}{5}{%
 \uput{1.9}[\n](0,0){$\overline{\i}$}}
 %
 %\uput[90](-0.3,0.25){$\overline{1},1$}
 \uput[0](3.4,0){$\overset{\xi}{\longrightarrow}$}
 \psline[linewidth=0.5pt]{}(-0.6,1.6)(1.7,0)(1.4,-1)
 \psline[linewidth=0.5pt]{}(0.6,1.6)(1.4,1)
  \psline[linewidth=0.5pt]{}(0.6,-1.6)(-1.7,0)(-1.4,1)
 \psline[linewidth=0.5pt]{}(-0.6,-1.6)(-1.4,-1)
 \end{pspicture}
&
 \begin{pspicture}(-3.4,-2.5)(3.4,2.5)
 \degrees[1]
 \multido{\n=0.0+.125}{8}{%
 \uput{1.6}[\n](0,0){$\bullet$}}
 \multido{\n=0.125+-0.125,\i=2+1}{4}{%
 \uput{1.9}[\n](0,0){\i}}
 \multido{\n=0.625+-0.125,\i=2+1}{4}{%
 \uput{1.9}[\n](0,0){$\overline{\i}$}}
 \uput[90](-0.8,0.2){$\overline{1}$}\uput[90](0.15,-0.15){$1$}
 \uput[0](-0.3,0){$\bullet$}
 \psline[linewidth=0.5pt]{}(-1.25,1.2)(-1.7,0)(0,0)(-1.25,1.2)
 \psline[linewidth=0.5pt]{}(1.25,-1.2)(1.7,0)(0,0)(1.25,-1.2)
 \psline[linewidth=0.5pt]{}(0,1.7)(1.2,1.2)
 \psline[linewidth=0.5pt]{}(0,-1.7)(-1.2,-1.2)
 \end{pspicture}
 \end{tabular}
 %%%%%%%%%%%%%%%%%%%%%%%%%%%%%%%%%%%%%%%%%%%%%%%%%%%%%%%%%%%%%%%%%%%%%%%%%%%%

\noindent Case 4.

\begin{tabular}{cc}
\begin{pspicture}(-3.4,-2.5)(3.4,2.5)
 \degrees[1]
 \multido{\n=0.0+.1}{10}{%
 \uput{1.6}[\n](0,0){$\bullet$}}
 \multido{\n=0.2+-0.1,\i=1+1}{5}{%
 \uput{1.9}[\n](0,0){\i}}
 \multido{\n=0.7+-0.1,\i=1+1}{5}{%
 \uput{1.9}[\n](0,0){$\overline{\i}$}}
 %
 %\uput[90](-0.3,0.25){$\overline{1},1$}
 \uput[0](3.4,0){$\overset{\xi}{\longrightarrow}$}
 \psline[linewidth=0.5pt]{}(-0.6,1.6)(0.6,1.6)%
  \psline[linewidth=0.5pt]{}(-1.7,0)(-1.4,1)(1.4,1)
 \psline[linewidth=0.5pt]{}(0.6,-1.6)(-0.6,-1.6)%
  \psline[linewidth=0.5pt]{}(1.7,0)(1.4,-1)(-1.4,-1)
 \end{pspicture}
&
 \begin{pspicture}(-3.4,-2.5)(3.4,2.5)
 \degrees[1]
 \multido{\n=0.0+.125}{8}{%
 \uput{1.6}[\n](0,0){$\bullet$}}
 \multido{\n=0.125+-0.125,\i=2+1}{4}{%
 \uput{1.9}[\n](0,0){\i}}
 \multido{\n=0.625+-0.125,\i=2+1}{4}{%
 \uput{1.9}[\n](0,0){$\overline{\i}$}}
 \uput[90](-0.8,0.2){$1$}\uput[90](0.3,-0.25){$\overline{1}$}
 \uput[0](-0.3,0){$\bullet$}
 \psline[linewidth=0.5pt]{}(-1.25,1.2)(-1.7,0)(0,0)(-1.25,1.2)
 \psline[linewidth=0.5pt]{}(1.25,-1.2)(1.7,0)(0,0)(1.25,-1.2)
 \psline[linewidth=0.5pt]{}(0,1.7)(1.2,1.2)
 \psline[linewidth=0.5pt]{}(0,-1.7)(-1.2,-1.2)
 \end{pspicture}
 \end{tabular}
 %%%%%%%%%%%%%%%%%%%%%%%%%%%%%%%%%%%%%%%%%%%%%%%%%%%%%%%%%%%%%%%%%%%%%%%%%%%%

\noindent Case 5.

\begin{tabular}{cc}
\begin{pspicture}(-3.4,-2.5)(3.4,2.5)
 \degrees[1]
 \multido{\n=0.0+.1}{10}{%
 \uput{1.6}[\n](0,0){$\bullet$}}
 \multido{\n=0.2+-0.1,\i=1+1}{5}{%
 \uput{1.9}[\n](0,0){\i}}
 \multido{\n=0.7+-0.1,\i=1+1}{5}{%
 \uput{1.9}[\n](0,0){$\overline{\i}$}}
 %
 %\uput[90](-0.3,0.25){$\overline{1},1$}
 \uput[0](3.4,0){$\overset{\xi}{\longrightarrow}$}
 \psline[linewidth=0.5pt]{}(-0.6,1.6)(1.7,0)
 \psline[linewidth=0.5pt]{}(0.6,1.6)(1.4,1)
 \psline[linewidth=0.5pt]{}(1.4,-1)(-1.4,1)
  \psline[linewidth=0.5pt]{}(0.6,-1.6)(-1.7,0)
 \psline[linewidth=0.5pt]{}(-0.6,-1.6)(-1.4,-1)
 \end{pspicture}

 \begin{pspicture}(-3.4,-2.5)(3.4,2.5)
 \degrees[1]
 \multido{\n=0.0+.125}{8}{%
 \uput{1.6}[\n](0,0){$\bullet$}}
 \multido{\n=0.125+-0.125,\i=2+1}{4}{%
 \uput{1.9}[\n](0,0){\i}}
 \multido{\n=0.625+-0.125,\i=2+1}{4}{%
 \uput{1.9}[\n](0,0){$\overline{\i}$}}
 \uput[90](-0.3,-0.3){$1$}\uput[90](-0.3,0.3){$\overline{1}$}
 \uput[0](-0.3,0){$\bullet$}
 \psline[linewidth=0.5pt]{}(-1.25,1.2)(1.7,0)(0,0)(-1.25,1.2)
 \psline[linewidth=0.5pt]{}(1.25,-1.2)(-1.7,0)(0,0)(1.25,-1.2)
 \psline[linewidth=0.5pt]{}(0,1.7)(1.2,1.2)
 \psline[linewidth=0.5pt]{}(0,-1.7)(-1.2,-1.2)
 \end{pspicture}
 \end{tabular}
\end{example}

\begin{theorem}\label{t3}
The map $\xi:\left(f\circ\iota\right)\left(NN^D(n)\right)\longrightarrow NC^D(n)$ is a bijection.
\end{theorem}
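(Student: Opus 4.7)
The plan is to prove the theorem by establishing that each of the six cases in Definition \ref{D:3} is invertible and that the six defining conditions partition $(f\circ\iota)(NN^D(n))$ into mutually exclusive subsets whose images partition $NC^D(n)$. First I would verify that $\xi(\pi) \in NC^D(n)$ case by case. For case (6) this is immediate. For cases (1)--(5), I would use the fact that $\pi \in NC^B(n)$ already satisfies the noncrossing condition in the $2n$-gon and then check that the rearrangement leaves the convex hulls of the new blocks pairwise disjoint (or coincident only on $\{1, \overline{1}\}$) in the $(2n-2)$-gon labeled $2, 3, \ldots, n, \overline{2}, \ldots, \overline{n}$ with centroid $\{1, \overline{1}\}$. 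For case (1), absorbing $\pm 1$ into the preexisting zero block $B_0$ is safe because $C_1 = B_1 \setminus \{1\}$ in $\pi$ already sits on a region of the polygon disjoint from $B_0$, and no other block of $\pi$ interferes.

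Next I would define the inverse $\xi^{-1} : NC^D(n) \to (f\circ\iota)(NN^D(n))$ by case analysis on the structure of the input $\sigma \in NC^D(n)$. Using the characterization of $(f\circ\iota)(NN^D(n))$ from the preceding corollary, one classifies $\sigma$ according to whether it has a zero block strictly containing $\{1, \overline{1}\}$, whether it has no zero block but has a block containing both $1$ and $\overline{1}$ alongside other integers of both signs, and according to the sign pattern of the mixed blocks adjacent to the centroid. Each configuration matches exactly one of the six cases of $\xi$, and the explicit rewiring inverts: for instance, case (1) is reversed by removing $\pm 1$ from the enlarged zero block and reattaching $1$ to a specific mixed block recognizable from the cyclic geometry; cases (3)--(5) are reversed by \emph{unsplicing} the chain $C_1, \ldots, C_k$ back into the original nested mixed blocks. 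One must then verify that the recovered partition actually lies in the image of $f \circ \iota$, which, via Theorem \ref{t2} and Lemma \ref{l2}, reduces to checking the appropriate zero-block-and-$1$ conditions.

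The main obstacle will be the verification that the rearranged blocks in cases (3), (4), (5) are pairwise noncrossing in the $D_n$ sense, and that the inversion of those cases is well-defined. Geometrically, the chain $C_1, C_2, \ldots, C_k$ is built by gluing the negative part of $B_j$ with the positive part of $B_{j-1}$ (or the symmetric variant), which corresponds to splicing consecutive nested mixed blocks around the block containing $1$; I would prove this produces a noncrossing $D_n$-partition through a careful analysis of the cyclic order of the integers $s(B_j)$ and $w(B_j)$ on the $(2n-2)$-gon, exploiting the fact that the $B_j$ were themselves noncrossing in the $2n$-gon of $\pi$. A secondary technical point is that the blocks $B_{k+1}, \ldots, B_m$ left untouched by $\xi$ do not cross the newly constructed $C_j$; this follows from the observation that, in $\pi$, they lie outside the sector of the polygon delimited by $s(B_1), s(B_k), w(B_k), w(B_1)$, and so their positions relative to the $C_j$ are identical to their positions relative to the union $B_1 \cup \cdots \cup B_k$ in $\pi$.
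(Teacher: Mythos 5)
Your proposal follows essentially the same route as the paper's proof: verify case by case that $\xi(\pi)$ lands in $NC^D(n)$, observe that the five non-identity cases are exactly the elements of $\left(f\circ\iota\right)\left(NN^D(n)\right)$ lying outside $NC^D(n)$ and that their images exhaust $NC^D(n)\setminus NC^B(n)$, and then invert each case explicitly. The paper states these steps more tersely (asserting the geometric verifications and the reversibility as ``easy to see''), but the decomposition of domain and codomain and the case-by-case inversion you outline are precisely its argument.
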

\begin{proof}
By definition \ref{D:3} it is easy to see that $\xi(\pi)$ is always a noncrossing $D_n$--partition.

By the investigation of the two posets $\left(f\circ\iota\right)\left(NN^D(n)\right)$ and $NC^D(n)$ immediately preceding the definition of $\xi$, it follows that the noncrossing $B_n$--partitions which satisfy one of the the first five cases in Definition \ref{D:3} are the only ones which are not in $NC^D(n)$. Furthermore, the images by $\xi$ of all these $B_n$--partitions are not in $NC^B(n)$, since in all cases the block containing the integer 1 always crosses at least one block containing both positive and negative integers.

Therefore $\xi$ restricted to the first five cases of Definition \ref{D:3} sends
$\left(f\circ\iota\right)\left(NN^D(n)\right) \setminus NC^D(n)$
(recall $\left(f\circ\iota\right)\left(NN^D(n)\right) \subset NC^B(n)$)
into $NC^D(n) \setminus NC^B(n)$.

Finally, the construction can be easily reversed. The only partitions in $NC^D(n)$ which are not in $NC^B(n)$ are those such that the block $B$ containing  1 contains also some other integer $j$ which crosses some other block containing both positive and negative integers. It follows that if
$\pi^{\prime}\in NC^D(n) \setminus NC^B(n)$ then $\pi^{\prime}$ must belong to the set
$\left(\xi\circ f\circ\iota\right)\left(NN^D(n)\right)$, showing that $\xi$ is indeed bijective.
\end{proof}

The composition of the maps $\iota$, $f$ and $\xi$ gives the desired bijection between the nonnesting and noncrossing $D_n$--partitions.

\begin{theorem}
The map $\xi\circ f\circ\iota:NN^D(n)\longrightarrow NC^D(n)$ is a bijection, and furthermore, writing
\begin{eqnarray*}
NN^D(n) & = & \left(NN^D(n) \bigcap NN^B(n)\right) \biguplus \left(NN^D(n) \setminus NN^B(n)\right), \\
NC^D(n) & = & \left(NC^D(n) \bigcap NC^B(n)\right) \biguplus \left(NC^D(n) \setminus NC^B(n)\right),
\end{eqnarray*}
we have that the two restrictions
\begin{eqnarray*}
\xi\circ f\circ\iota &:& NN^D(n) \bigcap NN^B(n) \longrightarrow NC^D(n) \bigcap NC^B(n) \\
\xi\circ f\circ\iota &:& NN^D(n) \setminus NN^B(n) \longrightarrow NC^D(n) \setminus NC^B(n)
\end{eqnarray*}
are also bijections.

Moreover the map $\xi\circ f\circ\iota$ preserves openers, closers and transients.
\end{theorem}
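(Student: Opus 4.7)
The plan is to combine the three constituent maps $\iota$, $f$, $\xi$ from Theorems~\ref{t2}, \ref{t1} and~\ref{t3} and then verify the two restriction bijections and the preservation of openers, closers and transients by tracing each case of Definitions~\ref{D:1} and~\ref{D:3}. Bijectivity of the full composition is immediate: $\iota$ is an injection with image $\iota(NN^D(n))\subset NN^B(n)$ (Theorem~\ref{t2}), $f$ is a bijection onto $NC^B(n)$ (Theorem~\ref{t1}), and $\xi$ is a bijection from $(f\circ\iota)(NN^D(n))$ onto $NC^D(n)$ (Theorem~\ref{t3}); hence their composition is a bijection $NN^D(n)\to NC^D(n)$.

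For the two restrictions, I would use the decomposition implicit in the proof of Theorem~\ref{t3}: $\xi$ is the identity on $(f\circ\iota)(NN^D(n))\cap NC^D(n)$ (case~(6) of Definition~\ref{D:3}), and sends $(f\circ\iota)(NN^D(n))\setminus NC^D(n)$ bijectively onto $NC^D(n)\setminus NC^B(n)$ via cases~(1)--(5). In parallel, $\iota$ is the identity on $NN^D(n)\cap NN^B(n)$ (case~(3) of Definition~\ref{D:1}) and nontrivially reshapes partitions of types~(1) and~(2). The key is to match these two decompositions through $f$: using Lemma~\ref{l2} together with the preservation of openers, closers and transients by $f$, one shows that $f$ carries $NN^D(n)\cap NN^B(n)$ into $NC^D(n)\cap NC^B(n)$ (checking that none of the three obstructions listed just before Definition~\ref{D:3} is introduced by $f$), and that $f$ carries $\iota(NN^D(n)\setminus NN^B(n))$ (whose structure is described in Theorem~\ref{t2}) into $NC^B(n)\setminus NC^D(n)$, precisely into the domain of cases~(1)--(5) of Definition~\ref{D:3}.

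For the preservation of openers, closers and transients the argument is case by case. On the intersection the claim is trivial, since $\iota$ is the identity, $f$ preserves (Theorem~\ref{t1}), and $\xi$ is the identity. For partitions of type~(2) in Definition~\ref{D:1}, Corollary~\ref{iotap} already gives that $\iota$ preserves openers, closers and transients, $f$ preserves them, and a direct inspection of the formulas $C_0,C_1,\ldots,C_k$ in the relevant case of Definition~\ref{D:3} shows that the integer~$1$ is placed back into the zero block, where it remains a transient. The only delicate case is type~(1), where by Corollary~\ref{iotap} the integer~$1$ is promoted from transient to opener under $\iota$; one then verifies that $f(\iota(\pi))$ triggers case~(5) of Definition~\ref{D:3}, in which the assignment $C_k=B_k^p\cup\{-1\}\cup B_0^n$ places~$1$ (via $-C_k$) inside a block containing both signs, demoting it back to a transient while leaving the closers $cl(\pi)$ untouched.

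The main obstacle will be precisely this bookkeeping: one must show that type~(1) (respectively type~(2)) partitions of Definition~\ref{D:1}, after $\iota$ and $f$, always produce input for case~(5) (respectively case~(1) or~(2)) of Definition~\ref{D:3}, and that the net effect on openers, closers and transients cancels out. This requires a careful use of Lemma~\ref{l2} to track the fate of the zero block and of the integer~$1$ through $f$, combined with the openers/closers/transients invariance already established for $f$.
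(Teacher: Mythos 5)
Your proposal takes essentially the same route as the paper's own proof: bijectivity by composing Theorems~\ref{t1}, \ref{t2} and~\ref{t3}; the restriction statements read off from the case decompositions of Definitions~\ref{D:1} and~\ref{D:3} together with Lemma~\ref{l2}; and preservation of openers, closers and transients by noting that the only non-preserving steps---type~(1) of Definition~\ref{D:1} under $\iota$ (Corollary~\ref{iotap}) and case~(5) of Definition~\ref{D:3} under $\xi$---correspond exactly to one another under $f\circ\iota$ and cancel. If anything, your bookkeeping of the restrictions through $f$ is more explicit than the paper's, which disposes of that part by citing the proof of Theorem~\ref{t3}.
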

\begin{proof}
The bijectivity follows from Theorems \ref{t1}, \ref{t2}, \ref{t3}, and the proof of the latter Theorem.

By Definition \ref{D:3} the only case where openers, closers and transients are not preserved by the map $\xi$ is when a $B_n$--partition has a nonzero block without 1, and 1 belongs to a block containing only positive integers, i.e. in the case $(5)$ of the definition. In this case we have
$1\in op(\pi)$ but $1\in tr(\xi(\pi))$.

For all other integers their properties of being openers, closers or transients are preserved, i.e.
$$\begin{cases}
op(\xi(\pi)) &= op(\pi)\setminus\{1\} \\
cl(\xi(\pi)) &= cl(\pi) \\
tr(\xi(\pi)) &= tr(\pi)\cup\{1\}
\end{cases}.$$

It is easy to see that $\pi$ is a partition satisfying case $(5)$ of Definition \ref{D:3} if and only if it is the image of a partition
satisfying case $(1)$ of Definition \ref{D:1}. Therefore recalling Corollary \ref{iotap}, the desired result follows.
\end{proof}

\nocite{*}


\begin{thebibliography}{1}
\providecommand{\natexlab}[1]{#1}
\providecommand{\url}[1]{\texttt{#1}} \expandafter\ifx\csname
urlstyle\endcsname\relax
  \providecommand{\doi}[1]{doi: #1}\else
  \providecommand{\doi}{doi: \begingroup \urlstyle{rm}\Url}\fi

\bibitem{Arm}
   D. Armstrong, `Generalized Noncrossing Partitions and
   Combinatorics of Coxeter Groups', \emph{Mem. Amer. Math. Soc.} (to appear),
   available on {\tt http://front.math.ucdavis.edu/0611.5106}

\bibitem{atha}
   C. A. Athanasiadis, `On noncrossing and nonnesting partitions for classical reflection groups',
   \emph{Electron. J. Comb.} {\bf 5} (1998), Research Paper 42, 16pp (electronic).

\bibitem{atha1}
   C. A. Athanasiadis and V. Reiner, `Noncrossing partitions for the group $D_n$',
   \emph{SIAM J. Discrete Math.} {\bf 18} (2004), 397--417.

\bibitem{Bia}
   P. Biane, `Some properties of crossings and partitions',
   \emph{Discrete Math.} {\bf 175} (1997), 41--53.

\bibitem{BB}
   A. Bj\"{o}rner and F. Brenti, \emph{Combinatorics of Coxeter groups},
   Graduate Texts in Mathematics 231, Springer, New York 2005.

\bibitem{brady}
   T. Brady and C. Watt, `$K(\pi,1)$'s for Artin groups of finite type', in
   Proceedings of the Conference on Geometry and Combinatorial Group Theory, Part I (Haifa, 2000),
   \emph{Geom. Dedicata} {\bf 94} (2002), 225--250.

\bibitem{CDDSY}
   W. Y. C. Chen, E. Y. P. Deng, R. R. X. Du, R. P. Stanley and C. H. Yan, `Crossings and nestings of matchings and partitions',
   \emph{Trans. Amer. Math. Soc.} {\bf 359} (2007), 1555--1575.

\bibitem{DZ}
   N. Dershowitz and S. Zaks, `Ordered trees and noncrossing partitions',
   \emph{Discrete Math.} {\bf 62} (1986), 215--218.

\bibitem{Edel}
   P. H. Edelman, `Chain enumeration and noncrossing partitions',
   \emph{Discrete Math.} {\bf 31} (1980), 171--180.

\bibitem{fink}
   A. Fink and B. I. Giraldo, `Bijections between noncrossing and
   nonnesting partitions for classical reflection groups', \emph{preprint}, available on
   {\tt http://front.math.ucdavis.edu/0810.2613}

\bibitem{hum}
   J. E. Humphreys, \emph{Reflection Groups and Coxeter Groups}, Cambridge Studies in Advanced
   Mathematics, vol. 29, Cambridge Univ. Press, Cambridge 1990.

\bibitem{KZ}
   A. Kasraoui and J. Zeng, `Distribution of crossings, nestings and alignments of two edges in
   matchings and partitions', \emph{Electron. J. Combin.} {\bf 13} (2006), Research Paper 33, 12 pp.
   (electronic).

\bibitem{Kra}
   C. Krattenthaler, `The $M$-triangle of generalised non-crossing partitions for the types $E\sb 7$ and $E\sb 8$',
   \emph{S\'{e}m. Lothar. Combin.} {\bf 54} (2005/07), Art. B541, 34 pp. (electronic).

\bibitem{Kre}
   G. Kreweras, `Sur les partitions non--rois\'{e}es d'un cycle',
   \emph{Discrete Math.} {\bf 1} (1972), 333--350.

\bibitem{LYHC}
   S. C. Liaw, H. G. Yeh, F. K. Hwang and G. J. Chang,
   `A simple and direct derivation for the number of noncrossing partitions',
   \emph{Proc. Amer. Math. Soc.} {\bf 126} (1998), 1579--1581.

\bibitem{m}
   R. Mamede, `A bijection between noncrossing and nonnesting partitions of types
   $A$ and $B$', \emph{Discrete Math. Theor. Comput. Sci.}, DMTCS proceedings of the FPSAC 2009, (to appear),
   available on {\tt http://front.math.ucdavis.edu/0810.1422}

\bibitem{m1}
   R. Mamede, `A bijection between noncrossing and nonnesting partitions of types $A$, $B$ and $C$',
   \emph{DMUC Preprint} {\bf 09--12}, 2009,
   available on {\tt http://www.mat.uc.pt/$\sim$cmuc/pubdetails.php?pub=1450}

\bibitem{NS}
   A. Nica and R. Speicher, \emph{Lectures on the combinatorics of free probability},
   London Mathematical Society Lecture Note Series 335, Cambridge University Press, Cambridge 2006.

\bibitem{reiner}
   V. Reiner, `Non--crossing partitions for classical reflection groups',
   \emph{Discrete Math.} {\bf 177} (1997), 195--222.

\bibitem{stump1}
   M. Rubey and C. Stump, `Crossing and nestings in set partitions of classical types', \emph{preprint},
   available on {\tt http://front.math.ucdavis.edu/0904.1097}

\bibitem{Sim1}
   R. Simion,  `Combinatorial statistics on noncrossing partitions',
   \emph{J. Combin. Theory Ser. A} {\bf 66} (1994), 270--301.

\bibitem{Sim2}
   R. Simion, `Combinatorial statistics on type--$B$ analogues of noncrossing partitions and restricted permutations',
   \emph{Electron. J. Combin.} {\bf 7} (2000), Research Paper 9, 27 pp. (electronic).

\bibitem{Sim3}
   R. Simion, `Noncrossing partitions', in Formal power series and algebraic
   combinatorics (Vienna, 1997), \emph{Discrete Math.} {\bf 217} (2000), 367--409.

\bibitem{Spe1}
   R. Speicher, `Free probability theory and non-crossing partitions',
   \emph{S\'{e}m. Lothar. Combin.} {\bf 39} (1997), Art. B39c, 38 pp. (electronic).

\bibitem{Spe2}
   R. Speicher, `Combinatorial theory of the free product with amalgamation and operator--valued free probability theory',
   \emph{Mem. Amer. Math. Soc.} {\bf 132} (1998).

\bibitem{Stump}
   C. Stump, `Non--crossing partitions, non--nesting partitions and Coxeter sortable elements in types $A$ and $B$',
   \emph{preprint}, available on {\tt http://front.math.ucdavis.edu/0808.2822}

\bibitem{Th}
   H. Thomas, `Tamari lattices and noncrossing partitions in type $B$', \emph{Discrete Math.} {\bf 306} (2006), 2711--2723.

\end{thebibliography}
\end{document}